\numberwithin{equation}{section}
\DeclareMathOperator{\supp}{supp}
\DeclareMathOperator{\dist}{dist}
\DeclareMathOperator{\var}{var}
\DeclareMathOperator{\tr}{tr}
\newcommand{\bbE}{\mathbb{E}}
\newcommand{\bbR}{\mathbb{R}}
\newcommand{\bbS}{\mathbb{S}}
\newcommand{\bbP}{\mathbb{P}}
\newcommand{\bbQ}{\mathbb{Q}}
\newtheorem{theorem}{Theorem}[section]
\newtheorem{lemma}{Lemma}[section]
\newtheorem{proposition}{Proposition}[section]
\newtheorem*{proposition*}{Proposition}
\newtheorem*{corollary*}{Corollary}
\newtheorem*{definitions*}{Definitions}
\newtheorem*{conjecture*}{\bf Conjecture}
\newtheorem*{example*}{\bf Example}
\theoremstyle{remark}
\newtheorem{remark}{\bf Remark}[section]
\newtheorem{assumption}{Assumption}[section]
\begin{document}

\title{Semi-groups of stochastic gradient descent and online principal component analysis: properties and diffusion approximations}

\author[1]{Yuanyuan Feng\thanks{yuanyuaf@andrew.cmu.edu}}
\author[2]{Lei Li\thanks{leili@math.duke.edu}}
\author[3]{Jian-Guo Liu\thanks{jliu@phy.duke.edu}}
\affil[1]{Department of Mathematics, Carnegie Mellon University, Pittsburgh, PA 15213, USA}
\affil[2]{Department of Mathematics, Duke University, Durham, NC 27708, USA.}
\affil[3]{ Departments of Mathematics and Physics, Duke University, Durham, NC 27708, USA.}

\date{}
\maketitle

\begin{abstract}
We study the Markov semigroups for two important algorithms from machine learning: stochastic gradient descent (SGD) and online principal component  analysis (PCA). We investigate the effects of small jumps on the properties of the semi-groups. Properties including regularity preserving, $L^{\infty}$ contraction are discussed. These semigroups are the dual of the semigroups for evolution of probability, while the latter are $L^{1}$ contracting and positivity preserving. Using these properties, we show that stochastic differential equations (SDEs) in $\bbR^d$ (on the sphere $\bbS^{d-1}$) can be used to approximate SGD (online PCA)  weakly. These SDEs may be used to provide some insights of the behaviors of these algorithms.
\end{abstract}

\section{Introduction}
 Stochastic gradient descent (SGD) is a stochastic approximation of the gradient descent optimization method for minimizing an objective function. It is widely used in support vector machines, logistic regression, graphical models and artificial neural networks, which shows amazing performance for large-scale learning due to its computational and statistical efficiency  \cite{bottou2010large,bubeck2015convex,bottou2016optimization}. Principal component  analysis (PCA) is a dimension reduction method which preserves most of the information in the large data set \cite{josse2011multiple}. Online PCA updates the current PCA each time new data are observed without recomputing it from scratch \cite{li2016near}. 
SGD and online PCA are both popular algorithms in machine learning. Computational efficiency and convegence behavior  in the context of large-scale learning  \cite{ghadimi2013stochastic,allen2016first}  of these two algorithms are studied tremendously.

In this paper, we focus on the properties of discrete semigroups for SGD and online PCA in Section \ref{sec:sgd} and Section \ref{sec:pca} respectively in the small jump regimes. Properties including regularity preserving, $L^{\infty}$ contraction are discussed. These semigroups are the dual of the semigroups for evolution of probability, while the latter are $L^{1}$ contracting and positivity preserving.  Based on these properties, we show that SGD and online PCA can be approximated in the weak sense by continuous-time stochastic differential equations (SDEs) in $\bbR^d$ or on the sphere $\bbS^{d-1}$  respectively.  These will help us understand the discrete algorithms in the viewpoint of diffusion approximation and randomly perturbed dynamical system. Other related works regarding diffusion approximation for SGD can be found in \cite{litaie2017,llql17}, while diffusion approximation using SDEs on sphere for online PCA seems new.

\section{The semigroups from SGD}\label{sec:sgd}
In machine learning, one optimization problem that appears frequently is
\begin{gather}\label{eq:optimization}
\min_{x\in \bbR^d}  f(x) ,
\end{gather}
where $f(x)$ is the loss function associated with a certain training set, and $d$ is the dimension for the parameter $x$. Usually, the training set is large and one instead considers the stochastic loss functions $f(x; \xi)$ such that
\begin{gather}
f(x)=\bbE f(x; \xi),
\end{gather}
and $f(x; \xi)$ is often much simpler (e.g. the loss function for a few randomly chosen samples) and thus much easier to handle. Here, $\xi \sim \nu$ is a random vector and $\nu$ is some probability distribution.  The stochastic gradient descent (SGD) is then to consider
\begin{gather}\label{eq:sgd}
x_{n+1}=x_n-\eta \nabla f(x_n; \xi_n) ,
\end{gather}
where $\xi_n\sim \nu$ are i.i.d  so that $\xi_n$ is independent of $x_n$, with the hope that $\{x_n\}$ can lead to some approximation (if not exact) solution to the optimization problem \eqref{eq:optimization}. 
Our goal in this section is to study the Markov chains formed by the SGD \eqref{eq:sgd}.

We introduce the following set of smooth functions
\begin{gather}\label{cknorm}
C_b^m(\bbR^d)=\left\{ f\in C^m(\bbR^d) ~~\Big|~~ \|f\|_{C^m}:=\sum_{|\alpha|\le m}|D^{\alpha}f|_{\infty}<\infty \right\}.
\end{gather}

\subsection{The semi-group and the properties}

Let $\bbE_{x_0}$ denote the expectation under the distribution of this Markov chain starting from $x_0$ and $\mu^n(\cdot; x_0)$ be the law of $x_n$.
Let $\mu(y, \cdot)$ be the transition probability. Then,
for any Borel set $E$, by the Markov property:
\begin{gather}\label{eq:markov}
\mu^{n+1}(E; x_0)=\int_{\bbR^d}\mu(y, E) \mu^n(dy; x_0)
=\int_{\bbR^d}  \mu^n(E;z) \mu(x_0, dz).
\end{gather}

For a fixed test function $\varphi\in L^{\infty}(\bbR^d)$, we define
\begin{gather}\label{eq:un}
u^{n}(x_0)=\bbE_{x_0}\varphi(x_n)=\int_{\bbR^d} \varphi(y) \mu^{n}(dy; x_0).
\end{gather}
The Markov property implies that
\begin{gather}
u^{n+1}(x_0)=\bbE_{x_0}(\bbE_{x_1}\varphi(x_{n+1})|x_1)
=\bbE_{x_0}u^n(x_1)=\int_{\bbR^d}\mu(x_0, dx_1) \int_{\bbR^d} \varphi(y)\mu^n(dy; x_1),
\end{gather}
which is consistent with \eqref{eq:markov}. Given the SGD \eqref{eq:sgd}, we find explicitly that
\begin{gather}\label{eq:expectation}
u^{n+1}(x)=\mathbb{E}(u^n(x-\eta \nabla f(x; \xi)))=:Su^n(x).
\end{gather}
Then, $u^0=\varphi$ and  $\{S^n\}_{n\ge 0}$ forms a semi-group for the Markov chain.

For the convenience of discussion, let us introduce $\dist(A, B)$ to mean the distance of two sets $A, B\subset \bbR^d$:
\[
\dist(A, B)=\inf_{x\in A, y\in B} |x-y|.
\]
We have the following claims regarding the effects of small jumps (small $\eta \nabla f(x, \xi)$ ) on the properties of semigroups:
\begin{theorem}\label{sequenceproperty}
Consider SGD \eqref{eq:sgd}. Let $u^n$ and $S$ be defined by \eqref{eq:un} and \eqref{eq:expectation} respectively. Then:
\begin{enumerate}[(i)]
\item (Regularity) Suppose that  for some $k\in \mathbb{N}$ $\varphi\in C^k(\bbR^d)$ and $\sup_{\xi}\|f(\cdot; \xi)\|_{C^{k+1}}<\infty$. Then  there exists $\eta_0>0$, such that 
\[
\|u^n\|_{C^k}\le C(k, T, \eta_0)\|\varphi\|_{C^k},~\forall~\eta\le \eta_0, ~n\eta\le T.
\]
\item ($L^{\infty}$ contraction) For any $n\ge 0$, $\|u^{n+1}\|_{L^\infty}=\|Su^{n}\|_{L^\infty}\le \|u^n\|_{L^\infty}$.

\item (Finite speed) If $\supp \varphi \subset K$ and $\sup_{\xi}\|f(\cdot; \xi)\|_{C^1}<\infty$, then for any $n\ge 0$, we have that $\dist(\supp u^n,K)\le CT$, where $C$ is a constant depending only on $f$.

\item (Mass confinement) Suppose $\sup_{\xi}\|f(\cdot; \xi)\|_{C^2}<\infty$ and that there exist $R>0, \delta>0$ such that whenever $|x|\ge R$, $\frac{x}{|x|}\cdot\nabla f(x; \xi)\ge \delta$ for any $\xi$. If $|x_0|\le R$, then for any $n\ge 0$, $\eta<\frac{2\delta R}{C^2}$, it holds that 
\[
\supp\mu^n\subset B(0,R+C\eta).
\]
\end{enumerate}
\end{theorem}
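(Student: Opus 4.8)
The plan is to base every part on the explicit representation $Su(x)=\bbE\,u(\Phi(x,\xi))$, where $\Phi(x,\xi):=x-\eta\nabla f(x;\xi)$ is the one-step SGD map, so that $u^n=S^n\varphi$ is an average of $\varphi$ composed with $n$ random iterates of $\Phi$. Parts (ii) and (iii) then follow almost immediately. For (ii), the pointwise bound $|Su(x)|\le\bbE\,|u(\Phi(x,\xi))|\le\|u\|_{L^\infty}$ gives the contraction $\|Su\|_{L^\infty}\le\|u\|_{L^\infty}$. For (iii), note that $|\Phi(x,\xi)-x|=\eta|\nabla f(x;\xi)|\le\eta C$ with $C=\sup_\xi\|f(\cdot;\xi)\|_{C^1}$; hence $Su(x)$ can be nonzero only if $\Phi(x,\xi)\in\supp u$ for some $\xi$, i.e. $\dist(x,\supp u)\le\eta C$. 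Thus each application of $S$ enlarges the support by at most $\eta C$, and iterating $n\le T/\eta$ times yields $\dist(\supp u^n,K)\le n\eta C\le CT$.

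For part (iv) I would set $C=\sup_\xi\|f(\cdot;\xi)\|_{C^2}$, so that $|\nabla f(x;\xi)|\le C$ everywhere, and argue by induction on $n$ that $|x_n|\le R+C\eta$ almost surely. Expanding the squared radius,
\[
|x_{n+1}|^2=|x_n|^2-2\eta\,x_n\cdot\nabla f(x_n;\xi_n)+\eta^2|\nabla f(x_n;\xi_n)|^2,
\]
I split into two cases under the inductive hypothesis $|x_n|\le R+C\eta$. If $|x_n|\le R$, then directly $|x_{n+1}|\le|x_n|+\eta C\le R+C\eta$. If instead $R<|x_n|\le R+C\eta$, the coercivity hypothesis gives $x_n\cdot\nabla f(x_n;\xi_n)=|x_n|\,\tfrac{x_n}{|x_n|}\cdot\nabla f(x_n;\xi_n)\ge\delta|x_n|>\delta R$, so $|x_{n+1}|^2\le|x_n|^2-2\eta\delta R+\eta^2C^2$; the assumption $\eta<2\delta R/C^2$ makes $-2\eta\delta R+\eta^2C^2<0$, whence $|x_{n+1}|<|x_n|\le R+C\eta$. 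Since $|x_0|\le R$ starts the induction, $\supp\mu^n\subset B(0,R+C\eta)$ for every $n$.

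The main obstacle is part (i), which requires controlling all derivatives up to order $k$ simultaneously. The strategy is to differentiate $Su(x)=\bbE\,u(\Phi(x,\xi))$ using the multivariate chain rule (Fa\`a di Bruno). The key structural observation is that $D\Phi=I-\eta\nabla^2 f$, so the top-order term $\bbE\,[(D^k u)(\Phi)\,(D\Phi)^{\otimes k}]$ has operator norm at most $(1+\eta C)^k\le 1+C'\eta$, while every lower-order term couples a derivative $D^j u$ with $j<k$ to a derivative $D^\beta\Phi$ with $|\beta|\ge 2$, each of which is $O(\eta)$ because it arises from differentiating $\nabla f$ (here $\sup_\xi\|f(\cdot;\xi)\|_{C^{k+1}}<\infty$ supplies the needed bounds on those derivatives). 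Collecting these contributions gives a recursive estimate of the form
\[
\|u^{n+1}\|_{C^k}\le (1+C\eta)\,\|u^n\|_{C^k},
\]
and iterating over $n\eta\le T$ produces $\|u^n\|_{C^k}\le e^{CT}\|\varphi\|_{C^k}$, i.e. the claimed bound with $C(k,T,\eta_0)=e^{CT}$.

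The delicate point in (i) is the bookkeeping that confirms every non-leading term genuinely carries a factor of $\eta$; this is what turns a crude factor like $(1+C\eta)^k$ into a clean $1+C\eta$ increment per step and prevents the constants from compounding uncontrollably. Choosing $\eta_0$ small enough to absorb the fixed constants coming from $\sup_\xi\|f(\cdot;\xi)\|_{C^{k+1}}$ then makes the Gr\"onwall-type iteration close uniformly for all $\eta\le\eta_0$ with $n\eta\le T$, completing the argument.
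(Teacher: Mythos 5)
Your proposal is correct and follows essentially the same route as the paper's own proof in all four parts: the pointwise bound for (ii), the $\eta C$-per-step support enlargement for (iii), the squared-radius dichotomy with $\eta<2\delta R/C^2$ for (iv), and for (i) differentiating $Su(x)=\bbE\,u(x-\eta\nabla f(x;\xi))$ so that the top-order term carries $I-\eta\nabla^2 f$ and all lower-order terms carry a factor $\eta$, yielding the $(1+C\eta)$ increment and the Gr\"onwall-type bound $e^{CT}\|\varphi\|_{C^k}$. The only difference is that you spell out the Fa\`a di Bruno bookkeeping and the induction in (iv) more explicitly than the paper, which compresses these into ``similar calculation reveals.''
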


\begin{proof}
(i).  Since $u^{n+1}(x_0)=\bbE(u^n(x_0-\eta \nabla f(x_0; \xi)))$, for any $1\le i\le d$,  
\[
\partial_iu^{n+1}(x_0)=\bbE\left(\sum_{j=1}^d\partial_ju^n(x_0-\eta \nabla f(x_0; \xi)(\delta_{ij}-\eta\partial_{i}\partial_{j}f(x_0,\xi))\right).
\]
 By $\sup_{\xi}\|f(\cdot; \xi)\|_{C^{k+1}}<\infty$, we have  $ \|u^{n+1}\|_{C^1}\le (1+C\eta)\|u^{n}\|_{C^1}$.  Similar calculation reveals that for any $k$, there exits $\eta_0>0$ such that for any $\eta\le \eta_0$ there exists $C(k, \eta_0)$ satisfying
\[
 \|u^{n+1}\|_{C^k}\le (1+C(k,\eta_0)\eta)\|u^{n}\|_{C^k}.
\]
Since $n\eta\le T$,  we have  
\[
\|u^{n}\|_{C^k}\le  (1+C(k,\eta_0)\eta)^n\|\varphi\|_{C^1}\le e^{C(k,\eta_0)n\eta}\|\varphi\|_{C^k}\le e^{C(k,\eta_0)T}\|\varphi\|_{C^k}.
\]

(ii). That $\|u^{n+1}\|_{L^\infty}\le \|u^n\|_{L^\infty}$ is clear by \eqref{eq:expectation}. 

(iii).  By equation \eqref{eq:expectation}, $x\in\supp u^{n+1} \Leftrightarrow x-\eta \nabla f(x; \xi)\in \supp u^n$. Hence, with the assumption $\sup_{\xi}\|f(\cdot; \xi)\|_{C^1}<\infty$,
\[
\dist(\supp u^{n+1},\supp u^n)\le C\eta,
\]
which implies the claimed result.

(iv). If $|x_n|\le R$, then $|x_{n+1}|\le |x_n|+\eta|\nabla f(x_n,\xi_n)|\le R+C\eta$. If $|x_n|> R$, 
\[
|x_{n+1}|^2=|x_n|^2-2\eta x_n\cdot\nabla f(x_n;\xi_n)+\eta^2|\nabla f(x_n;\xi_n)|^2.
\]
By assumption, $|x_{n+1}|^2\le |x_n|^2-2\eta\delta|x_n|+C^2\eta^2$. If we take $\eta<\frac{2\delta R}{C^2}$, we obtain that $|x_{n+1}|^2\le |x_n|^2$. Hence we conclude that $|x_{n+1}|\le R+C\eta$ for any $n\ge 0$. 
\end{proof}

\begin{proposition}\label{dualsgd}
  Assume $\sup_{\xi}\|f(\cdot; \xi)\|_{C^2}<\infty$. If $\eta$ is sufficiently small, then there exists $S^*: L^1(\mathbb{R}^d) \to L^1(\mathbb{R^d})$ such that $S$ is the dual of $S^*$, and 
$S^*$ is given by 
\begin{gather}\label{eq:Sstar}
S^*\rho=\bbE \left( \frac{\rho}{|\det(I-\eta \nabla^2f)|}\circ h(\cdot ; \xi)\right),
\end{gather}
where $h(\cdot; \xi)$ is the inverse mapping of $x\mapsto x-\eta \nabla f(x; \xi)$ and $'\circ'$ means function composition. Further, $S^*$ satisfies:
\begin{enumerate}[(i)]
\item  $\int_{\bbR^d} S^*\rho dx=\int_{\bbR^d} \rho dx$.

\item 
If $\rho\in L^1$ is nonnegative, then $S^*\rho\ge 0$ and $\|S^*\rho\|_1=\|\rho\|_1$. 
 $S^*$ is $L^1$ contraction.
 \end{enumerate}
\end{proposition}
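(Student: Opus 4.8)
The plan is to establish, for $\eta$ small, that the map $\Phi_\xi(x) := x - \eta \nabla f(x;\xi)$ is a global $C^1$ diffeomorphism of $\bbR^d$ with inverse $h(\cdot;\xi)$, and then to read off both the formula \eqref{eq:Sstar} and all the stated properties from the change-of-variables formula applied inside the expectation. For the diffeomorphism step, set $M := \sup_\xi \|f(\cdot;\xi)\|_{C^2}$, so that $\nabla f(\cdot;\xi)$ is $M$-Lipschitz and $\|\nabla^2 f(x;\xi)\| \le M$ for all $x,\xi$. Choosing $\eta < 1/M$, the map $x \mapsto -\eta\nabla f(x;\xi)$ is a contraction with constant $\eta M<1$; hence $\Phi_\xi$ is injective via the lower bound $|\Phi_\xi(x)-\Phi_\xi(y)| \ge (1-\eta M)|x-y|$, and surjective because $\Phi_\xi(x)=y$ rewrites as the fixed-point equation $x = y + \eta\nabla f(x;\xi)$, whose right-hand side is a contraction, so Banach's theorem produces a unique solution $x = h(y;\xi)$. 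Since the Jacobian $D\Phi_\xi(x) = I - \eta\nabla^2 f(x;\xi)$ has eigenvalues in $(1-\eta M, 1+\eta M)\subset(0,2)$, the determinant $\det(I-\eta\nabla^2 f)$ is positive and bounded away from $0$ uniformly in $x,\xi$; the inverse function theorem then makes $h(\cdot;\xi)$ genuinely $C^1$, and in particular the absolute value in \eqref{eq:Sstar} is cosmetic.

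Next I would derive the duality. For $\varphi\in L^\infty$ and $\rho\in L^1$, write $\int (S\varphi)\rho\,dx = \int \bbE[\varphi(\Phi_\xi(x))]\,\rho(x)\,dx$ and exchange expectation and integral by Fubini, which is legitimate since $\varphi$ is bounded, $\rho\in L^1$, and the integrand is jointly measurable in $(x,\xi)$. For each fixed $\xi$, substituting $y = \Phi_\xi(x)$, so that $x=h(y;\xi)$ and $dx = |\det(I-\eta\nabla^2 f(h(y;\xi);\xi))|^{-1}\,dy$, turns the inner integral into $\int \varphi(y)\,\rho(h(y;\xi))\,|\det(I-\eta\nabla^2 f)|^{-1}\!\circ h(y;\xi)\,dy$. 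Reassembling the expectation over $\xi$ gives $\int (S\varphi)\rho\,dx = \int \varphi\,(S^*\rho)\,dy$ with $S^*$ exactly as in \eqref{eq:Sstar}, which is precisely the asserted adjoint relation that $S$ is the dual of $S^*$.

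The remaining assertions then follow quickly. Property (i) is obtained either by taking $\varphi\equiv 1$ in the duality, so that $S\varphi\equiv 1$ forces $\int\rho = \int S^*\rho$, or equivalently by reversing the change of variables directly in $\int S^*\rho\,dy$, where the Jacobian factor cancels the determinant weight. For (ii), positivity of $S^*\rho$ when $\rho\ge 0$ is immediate from \eqref{eq:Sstar}, since the integrand is a nonnegative quantity averaged over $\xi$; together with (i) this yields $\|S^*\rho\|_1 = \int S^*\rho = \int \rho = \|\rho\|_1$. For general $\rho$, the triangle inequality inside the expectation gives the pointwise bound $|S^*\rho| \le S^*|\rho|$, whence $\|S^*\rho\|_1 \le \|S^*|\rho|\|_1 = \||\rho|\|_1 = \|\rho\|_1$, which simultaneously confirms $S^*:L^1\to L^1$ and the $L^1$-contraction property.

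I expect the main obstacle to be the rigorous justification of the global change of variables rather than the bookkeeping: one must verify that $\Phi_\xi$ is a diffeomorphism of all of $\bbR^d$ and not merely a local one, control the measurable dependence of $h(\cdot;\xi)$ and of the Jacobian factor on $\xi$ so that both the Fubini exchange and the definition of $S^*\rho$ as a genuine $L^1$ function are valid, and exploit the uniform positivity of $\det(I-\eta\nabla^2 f)$ to keep the weight bounded above and below. Once these points are secured, the algebra of the change of variables and the contraction estimates are routine.
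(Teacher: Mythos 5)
Your proof is correct, and its core --- realizing $S^*$ through the change of variables $y=\Phi_\xi(x)$ inside the expectation and reading the adjoint relation off $(L^1)'=L^\infty$ --- is exactly the paper's argument, as is the derivation of (i) by testing against $\varphi\equiv 1$. Two points differ. First, the paper simply asserts that $x\mapsto x-\eta\nabla f(x;\xi)$ is bijective for small $\eta$ because $\|f(\cdot;\xi)\|_{C^2}$ is uniformly bounded; you actually prove it (injectivity from the lower Lipschitz bound, surjectivity from Banach's fixed-point theorem) and extract the uniform positive lower bound on $\det(I-\eta\nabla^2 f)$ that the global change of variables genuinely requires --- a gap the paper leaves to the reader. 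A cosmetic quibble: with the paper's definition of $\|\cdot\|_{C^2}$ as a sum of sup-norms of partial derivatives, the Lipschitz constant of $\nabla f(\cdot;\xi)$ is a dimensional constant times $M$, so your threshold should read $\eta<1/(CM)$ rather than $\eta<1/M$. Second, for the $L^1$ contraction the paper invokes the Crandall--Tartar lemma (order preservation plus mass conservation implies $L^1$ contraction), whereas you argue directly: $|S^*\rho|\le S^*|\rho|$ pointwise by the triangle inequality under the expectation, then $\|S^*\rho\|_1\le \|S^*|\rho|\|_1=\|\rho\|_1$ by positivity and (i) applied to $|\rho|$. Your route is more elementary and self-contained, exploiting the explicit positive kernel; the paper's citation buys generality, since Crandall--Tartar applies to any monotone mass-preserving map (even nonlinear ones) without needing a formula for $S^*$.
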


\begin{proof}
From the expression, we see directly that
\[
|S^*\rho|\le \bbE \Big|\left( \frac{\rho}{|\det(I-\eta \nabla^2f)|}\circ h(\cdot ; \xi)\right)\Big|
=\bbE \left( \frac{|\rho|}{\det(I-\eta \nabla^2f)}\circ h(\cdot ; \xi)\right).
\]
This inequality implies that if $\rho\in L^1$, then $S^*\rho\in L^1$.

Take $\rho \in L^1$. Since $Su(x)=\mathbb{E}(u(x-\eta \nabla f(x;\xi)))$ for $u\in L^{\infty}$, we find that
\begin{gather*}
\langle Su(x), \rho \rangle
=\int_{\bbR^d} \mathbb{E}(u(x-\eta \nabla f(x;\xi))) \rho(x)\,dx
=\bbE \int_{\bbR^d} u(x-\eta \nabla f(x;\xi)) \rho(x)\,dx.
\end{gather*}
When $\eta$ is small, $x-\eta \nabla f(x; \xi)$ is bijective  for each $\xi$ because $\|f(\cdot; \xi)\|_{C^2}$ is uniformly bounded. Denote $h(y; \xi)$ the inverse mapping of $y=x-\eta\nabla f(x; \xi)$. It follows that
\[
\langle Su(x), \rho \rangle=\bbE \int_{\bbR^d}  u(y)  \frac{\rho}{|\det(I-\eta \nabla^2f)|}\circ h(y; \xi) dy .
\]
Using the fact that $(L^1)'=L^{\infty}$, we conclude that $S$ is the dual operator of $S^*$.

$(i)$  Take $u\equiv 1$, we have $Su\equiv 1$. In this case, $\int_{\bbR^d} S^*\rho\, dx=\int_{\bbR^d}\rho Su\, dx=\int_{\bbR^d}\rho\, dx$.

$(ii)$ That $\rho \ge 0$ implies that $S^*\rho\ge 0$ is obvious by the expression. Using Crandall-Tartar lemma \cite[Proposition 1]{crandalltarar80},  we get $\|S^*\rho\|_1\le\|\rho\|_1$ for general $\rho\in L^1$.

\begin{remark}
We remark that \eqref{eq:Sstar} is consistent with the first equality in \eqref{eq:markov}. To see this, assume $\mu^n$ is absolutely continuous to Lebesgue measure so that $\rho^n(\cdot ; x_0)=\frac{d\mu^n(\cdot; x_0)}{dx}$.
Then, \eqref{eq:markov} implies that
\[
\rho^{n+1}(x; x_0)=\bbE \int_{\bbR^d} \rho^n(y; x_0) \delta(x-(y-\eta \nabla f(y ;\xi)))\, dy=S^*\rho^n(x; x_0).
\]
\end{remark}

\end{proof}

\subsection{The diffusion approximation}

The discrete semi-groups are close to the continuous semi-groups generated by certain SDEs in the weak sense. Consider the SDE in It\^o sense \cite{oksendal03} given by
\begin{gather}\label{eq:sde1}
dX=b(X)\, dt+\sqrt{\eta\Sigma}\, dW .
\end{gather}
We use $e^{tL}\varphi$ to represent the solution to the backward Kolmogrov equation
\[
u_t=Lu:=b(x)\cdot\nabla u+\frac{1}{2}\eta \Sigma:\nabla^2 u, ~u(\cdot, 0)=\varphi.
\]
It is well-known that \cite{oksendal03}
\begin{gather}\label{eq:u}
u(x, t)=e^{tL}\varphi=\bbE_x\varphi(X(t)).
\end{gather}
Similarly, we use $e^{tL^*}\rho_0$ to represent the solution of the forward Kolmogrov equation (Fokker-Planck equation) at time $t$:
\[
\rho_t=L^*\rho:=-\nabla\cdot(b\rho)+\frac{1}{2}\eta \sum_{i,j}\partial_{ij}(\Sigma_{ij}\rho),~\rho(\cdot, 0)=\rho_0.
\]
Then, $\{e^{tL}\}$ and $\{e^{tL^*}\}$ form two semi-groups. If $\rho_0$ is the initial distribution of $X(t)$, then $e^{tL^*}\rho_0$ is the probability distribution at time $t$. 

\begin{lemma}
\begin{enumerate}[(i)]
\item
If $\rho_0\in L^1(\mathbb{R}^d)$ and $\rho_0\ge 0$, then $e^{tL^*}\rho_0\ge 0$ and $\|e^{tL^*}\rho_0\|_1=\|\rho_0\|_1$. Consequently, $e^{tL}$ is $L^1$-contraction, i.e. $\forall \rho_0\in L^1$, 
\[
\|e^{tL^*}\rho_0\|_1\le \|\rho_0\|_1,~\forall t\ge 0.
\]
\item 
$e^{tL}: L^{\infty}(\bbR^d)\to L^{\infty}(\bbR^d)$ is a contraction.
\end{enumerate}
\end{lemma}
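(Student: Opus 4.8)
The plan is to mirror the structure of Proposition~\ref{dualsgd}: establish positivity preservation and exact mass conservation for $e^{tL^*}$ on nonnegative data, deduce the $L^1$ contraction from the Crandall--Tartar lemma, and then obtain the $L^\infty$ contraction for $e^{tL}$ either by duality or directly from the probabilistic representation \eqref{eq:u}.

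For part (i), I would exploit the fact that $e^{tL^*}\rho_0$ is the density of the law of the diffusion $X(t)$ solving \eqref{eq:sde1} with initial distribution $\rho_0$. Under the standing assumption $\sup_\xi\|f(\cdot;\xi)\|_{C^2}<\infty$ the drift $b$ and the diffusion matrix $\Sigma$ are bounded and Lipschitz, so the SDE admits a unique non-exploding strong solution whose time-$t$ law is a genuine probability measure. Positivity, $e^{tL^*}\rho_0\ge 0$, is then immediate since the law of a random variable is a nonnegative measure; alternatively one invokes the parabolic maximum principle for the Fokker--Planck operator. For mass conservation I would differentiate $t\mapsto\int_{\bbR^d}e^{tL^*}\rho_0\,dx$ and use that $L^*\rho=-\nabla\cdot(b\rho)+\frac{1}{2}\eta\sum_{i,j}\partial_{ij}(\Sigma_{ij}\rho)$ is in divergence form, so that $\int_{\bbR^d}L^*\rho\,dx=0$; hence $\|e^{tL^*}\rho_0\|_1=\int_{\bbR^d}e^{tL^*}\rho_0\,dx=\int_{\bbR^d}\rho_0\,dx=\|\rho_0\|_1$ for nonnegative $\rho_0$.

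With order preservation and integral conservation in hand, I would extend to signed $\rho_0\in L^1$ exactly as in Proposition~\ref{dualsgd}(ii): the Crandall--Tartar lemma \cite{crandalltarar80} converts these two properties into $\|e^{tL^*}\rho_0\|_1\le\|\rho_0\|_1$ for all $t\ge0$. For part (ii), the cleanest route is duality: since $e^{tL}$ is the adjoint of $e^{tL^*}$ (the two Kolmogorov equations being formal adjoints), the $L^1$ contraction of $e^{tL^*}$ transfers to an $L^\infty$ contraction of $e^{tL}$ via $\|e^{tL}\varphi\|_\infty=\sup_{\|\rho\|_1\le1}|\langle\varphi,e^{tL^*}\rho\rangle|\le\|\varphi\|_\infty$. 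Equivalently, the representation \eqref{eq:u} yields the bound directly, $|e^{tL}\varphi(x)|=|\bbE_x\varphi(X(t))|\le\bbE_x|\varphi(X(t))|\le\|\varphi\|_\infty$.

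I expect the only genuine subtlety to lie in justifying the two formal manipulations in part (i): that $e^{tL^*}\rho_0$ is an honest $L^1$ function rather than merely a measure, and that the boundary terms in the integration by parts underlying $\int_{\bbR^d}L^*\rho\,dx=0$ actually vanish. Both are controlled by the regularity of the coefficients: boundedness of $b$ and $\Sigma$ gives sub-Gaussian tail bounds for $X(t)$ (and, if $\Sigma$ is uniformly elliptic, a smooth transition density), so that $e^{tL^*}\rho_0\in L^1$ with rapidly decaying spatial tails and the hypotheses of the divergence theorem hold automatically. Everything else is the same soft functional-analytic argument already used for the discrete semigroup.
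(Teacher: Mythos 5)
Your proposal is correct and follows essentially the same route as the paper: for (i), positivity and mass conservation of the Fokker--Planck semigroup (which the paper simply cites as well known) combined with the Crandall--Tartar lemma, and for (ii), the duality pairing $\langle e^{tL}\varphi,\rho\rangle=\langle\varphi,e^{tL^*}\rho\rangle$ together with the $L^1$ bound from (i). The additional work you do in justifying positivity and mass conservation---the probabilistic representation of $e^{tL^*}\rho_0$ and the divergence form of $L^*$---merely fills in details that the paper delegates to a citation.
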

\begin{proof}
(i). Since $e^{tL^*}$ is the solution operator to Fokker-Planck equations, it is well-known that it preserves positivity and probability \cite{soize1994fokker}, and for general initial data $\rho_0\in L^1$, 
\[
\int_{\bbR^d} e^{tL^*}\rho_0\, dx=\int_{\bbR^d}\rho_0\, dx.
\]
 Then, $\forall u, v\in L^1$ and $u\le v$, it holds $e^{tL^*}u\le e^{tL^*}v$, and consequently 
 \[
 \|e^{tL^*}\rho_0\|_1\le \|\rho_0\|_1,~~\forall \rho_0\in L^1,
 \]
  by Crandall-Tartar lemma \cite[Proposition 1]{crandalltarar80}.

(ii). Fix $\varphi \in L^\infty$. We take $\rho \in L^1$, and have 
\[
\langle e^{tL}\varphi,\rho\rangle=\langle \varphi, e^{tL^*}\rho\rangle\le\|\varphi\|_{L^\infty}\|e^{tL^*}\rho\|_{L^1}\le\|\varphi\|_{L^\infty}\|\rho\|_{L^1},
\] which yields that $\|e^{tL}\varphi\|_{L^\infty}\le\|\varphi\|_{L^\infty}$. 
\end{proof}

We now show that the discrete semi-groups can be approximated by the continuous semi-groups in the weak sense (this is the standard terminology in SDE analysis while in functional analysis a more appropriate term might be `weak-star sense' ):
\begin{theorem}\label{thm:sdeforsgd}
Assume that $\sup_{\xi}\|f(\cdot,\xi)\|_{C^5}<\infty$. Consider the SDE \eqref{eq:sde1} in It\^o sense. $u^n(x)$ and $u(x, t)$ are given in \eqref{eq:un} and \eqref{eq:u} respectively.
If we choose $b(x)=-\nabla f(x)$ and $\Sigma\in C_b^2(\mathbb{R}^d)$ be positive semi-definite, then for all $\varphi\in C_b^4(\mathbb{R}^d)$, there exist $\eta_0>0$ and $C(T,  \|\varphi\|_{C^4},\eta_0)>0$ such that
\[
	\sup_{n: n\eta\le T}\|u^n-u(\cdot, n\eta)\|_{\infty}\le C(T,  \|\varphi\|_{C^4},\eta_0)\eta,
	~\forall \eta\le \eta_0.
\]
If instead $\sup_{\xi}\|f(\cdot,\xi)\|_{C^7}<\infty$ and we choose $b(x)=-\nabla f(x)-\frac{1}{4}\eta\nabla|\nabla f(x)|^2$, $\Sigma=\var(\nabla f(x;\xi))$, then for all $\varphi\in C_b^6(\mathbb{R}^d)$, there exist $\eta_0>0$ and $C(T,  \|\varphi\|_{C^6},\eta_0)>0$ such that
\[
\sup_{n: n\eta\le T}\|u^n-u(\cdot, n\eta)\|_{\infty}\le C(T, \|\varphi\|_{C^6},\eta_0)\eta^2,
~\forall \eta\le \eta_0.
\]
\end{theorem}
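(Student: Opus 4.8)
The plan is to run the standard consistency-plus-stability scheme for weak approximation of SDEs (``Lady Windermere's fan''), treating the one-step SGD operator $S$ of \eqref{eq:expectation} as a numerical integrator for the SDE \eqref{eq:sde1}. Write $p=1$ in the first case and $p=2$ in the second, so that the claimed rate is $\eta^{p}$ and the required smoothness of $\varphi$ is $C^{2p+2}$. Since $u^{n}=S^{n}\varphi$ and $u(\cdot,n\eta)=e^{n\eta L}\varphi$, I first telescope
\[
S^{n}\varphi-e^{n\eta L}\varphi=\sum_{k=0}^{n-1}S^{\,n-1-k}\bigl(S-e^{\eta L}\bigr)e^{k\eta L}\varphi .
\]
Both $S$ and $e^{\eta L}$ are $L^{\infty}$ contractions --- the former by Theorem \ref{sequenceproperty}(ii), the latter by the preceding Lemma --- so neither enlarges the sup-norm and
\[
\|u^{n}-u(\cdot,n\eta)\|_{\infty}\le\sum_{k=0}^{n-1}\bigl\|(S-e^{\eta L})\,e^{k\eta L}\varphi\bigr\|_{\infty}.
\]
It then suffices to bound a single local error $\|(S-e^{\eta L})v\|_{\infty}$ by $C\eta^{p+1}\|v\|_{C^{2p+2}}$ and to control $\|e^{k\eta L}\varphi\|_{C^{2p+2}}$; summing the $n\le T/\eta$ terms produces the gain of $\eta^{p}$.

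The stability ingredient is a regularity estimate for the continuous semigroup, the exact analogue of Theorem \ref{sequenceproperty}(i): there is $C(T)$ with $\|e^{tL}\varphi\|_{C^{2p+2}}\le C(T)\|\varphi\|_{C^{2p+2}}$ for $t\le T$. This follows by differentiating the backward Kolmogorov equation (equivalently, the stochastic flow of \eqref{eq:sde1}) and applying Gronwall, and it is here that the hypotheses on $f$ enter: the coefficients must lie in $C_b^{2p+2}$, which for $b=-\nabla f$ (and, in the second case, $b=-\nabla f-\tfrac14\eta\nabla|\nabla f|^{2}$) together with $\Sigma\in C_b^{2}$ resp.\ $\Sigma=\var(\nabla f(\cdot;\xi))$ forces $\sup_{\xi}\|f(\cdot;\xi)\|_{C^{2p+3}}<\infty$, i.e.\ $C^{5}$ for $p=1$ and $C^{7}$ for $p=2$. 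Taking $v=e^{k\eta L}\varphi$ in the local error then renders each summand $\le C(T)\eta^{p+1}\|\varphi\|_{C^{2p+2}}$.

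The heart of the argument, and the main obstacle, is the one-step consistency bound. For fixed smooth $v$ I Taylor-expand both operators in $\eta$. On the discrete side, $Sv(x)=\bbE\,v\bigl(x-\eta\nabla f(x;\xi)\bigr)$ gives
\[
Sv=v-\eta\,\bbE[\nabla f(\cdot;\xi)]\cdot\nabla v+\tfrac{\eta^{2}}{2}\,\bbE\bigl[\nabla f(\cdot;\xi)^{\otimes 2}\bigr]:\nabla^{2}v+O\!\bigl(\eta^{3}\|v\|_{C^{3}}\bigr),
\]
where $\bbE[\nabla f(\cdot;\xi)]=\nabla f$ (differentiating under the expectation, justified by the uniform $C^{1}$ bound) and $\bbE[\nabla f^{\otimes 2}]=\nabla f\otimes\nabla f+\var(\nabla f(\cdot;\xi))$. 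On the continuous side, with $L=b\cdot\nabla+\tfrac12\eta\,\Sigma:\nabla^{2}$ carrying its own factor of $\eta$ (and, in the second case, so does $b$),
\[
e^{\eta L}v=v+\eta L v+\tfrac{\eta^{2}}{2}L^{2}v+O\!\bigl(\eta^{3}\|v\|_{C^{6}}\bigr).
\]
In the first case $b=-\nabla f$ makes the $O(\eta)$ terms agree, so $\|(S-e^{\eta L})v\|_{\infty}\le C\eta^{2}\|v\|_{C^{4}}$ and the telescoping yields the $O(\eta)$ rate. In the second case I must also match the $O(\eta^{2})$ coefficients; the key identity is
\[
\tfrac12(\nabla f\cdot\nabla)^{2}v=\tfrac14\nabla|\nabla f|^{2}\cdot\nabla v+\tfrac12(\nabla f\otimes\nabla f):\nabla^{2}v ,
\]
which shows that the correction drift $-\tfrac14\eta\nabla|\nabla f|^{2}$ exactly cancels the first-order part produced by $\tfrac12(b\cdot\nabla)^{2}v$, while $\Sigma=\var(\nabla f(\cdot;\xi))$ supplies the remaining $\nabla^{2}v$ term to reproduce $\bbE[\nabla f^{\otimes 2}]:\nabla^{2}v$. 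The two second-order generators then coincide, leaving $\|(S-e^{\eta L})v\|_{\infty}\le C\eta^{3}\|v\|_{C^{6}}$ and the $O(\eta^{2})$ rate.

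The delicate points are all concentrated in this last step: interchanging expectation and differentiation, controlling the Taylor remainders uniformly in $x$ and $\xi$ (precisely what the $C^{2p+2}$ bound on $v$ and the high $C^{k}$ bounds on $f$ buy us), and the algebraic verification that the two second-order generators agree in the second case. Once the local error and the semigroup regularity bound are in hand, the conclusion is immediate: summing the $n\le T/\eta$ local contributions gives
\[
\|u^{n}-u(\cdot,n\eta)\|_{\infty}\le(T/\eta)\cdot C(T)\eta^{p+1}\|\varphi\|_{C^{2p+2}}=C(T,\|\varphi\|_{C^{2p+2}},\eta_{0})\,\eta^{p},
\]
as claimed for $p=1$ and $p=2$.
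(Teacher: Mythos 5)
Your overall architecture (one-step consistency plus stability, summed over the $n\le T/\eta$ steps) is the same ``fan'' scheme the paper uses, and your consistency algebra is correct: the identity $\tfrac12(\nabla f\cdot\nabla)^2v=\tfrac14\nabla|\nabla f|^2\cdot\nabla v+\tfrac12(\nabla f\otimes\nabla f):\nabla^2v$ is exactly what makes the corrected drift $b=-\nabla f-\tfrac14\eta\nabla|\nabla f|^2$ and $\Sigma=\var(\nabla f(\cdot;\xi))$ match the discrete expansion, in agreement with \eqref{semiexpan1}, \eqref{semiexpan} and \eqref{taylor}. The gap is in the \emph{direction} of your telescoping. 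You write the global error as $\sum_{k}S^{\,n-1-k}(S-e^{\eta L})e^{k\eta L}\varphi$, so your local error is evaluated at $v=e^{k\eta L}\varphi$, and you must prove the stability estimate $\|e^{tL}\varphi\|_{C^{2p+2}}\le C(T)\|\varphi\|_{C^{2p+2}}$ for the \emph{continuous} semigroup. Under the theorem's hypotheses this is not available: in the first case $\Sigma$ is only assumed to lie in $C_b^2$ and to be positive semi-definite (possibly degenerate). Propagating $C^4$ bounds through $e^{tL}$ requires, via the stochastic flow, $\sqrt{\eta\Sigma}\in C_b^4$, or, via differentiating the backward Kolmogorov equation, four derivatives of $\Sigma$; neither follows from $\Sigma\in C_b^2$ (the square root of a degenerate $C^2$ psd matrix field is in general only Lipschitz, so $e^{tL}\varphi$ may fail to be $C^4$ at all). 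Your Gronwall argument also does not close as stated, since the commutator $[\partial^\alpha,\Sigma:\nabla^2]u$ contains terms $\partial^\beta\Sigma:\nabla^2\partial^{\alpha-\beta}u$ involving both third and fourth derivatives of $\Sigma$ and derivatives of $u$ of order $|\alpha|+1$. You half-notice the problem when you say the coefficients ``must lie in $C_b^{2p+2}$,'' but then assert this is implied by the hypotheses; it is not, and no amount of smoothness of $f$ can repair it, because in the first case $\Sigma$ is a free datum independent of $f$. The same square-root/degeneracy issue afflicts the second case with $\Sigma=\var(\nabla f(\cdot;\xi))$.

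The paper telescopes in the opposite order precisely to avoid this. It bounds $R^n=\|u^{n+1}-e^{\eta L}u^n\|_\infty$, i.e.\ the local error is evaluated at the \emph{discrete} iterate $u^n=S^n\varphi$, whose $C^{2p+2}$ regularity is supplied by Theorem \ref{sequenceproperty}(i) from $\sup_\xi\|f(\cdot;\xi)\|_{C^{2p+3}}<\infty$; the accumulated error is then propagated through $e^{\eta L}$, of which only the $L^\infty$ contraction (the preceding Lemma) is used, giving $E^{n+1}\le E^n+R^n$. For the expansion of $e^{\eta L}u^n$ one uses
\begin{equation*}
e^{\eta L}u^n-u^n-\eta Lu^n=\int_0^\eta\!\!\int_0^s e^{rL}L^2u^n\,dr\,ds,\qquad
\|e^{rL}L^2u^n\|_\infty\le\|L^2u^n\|_\infty,
\end{equation*}
which needs only $u^n\in C^4$, $b\in C_b^2$ and $\Sigma\in C_b^2$ --- all of which the hypotheses provide (and analogously at third order in the $O(\eta^2)$ case). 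So your proof can be repaired by reversing the roles: evaluate $S-e^{\eta L}$ on $u^k$ rather than on $e^{k\eta L}\varphi$, and propagate with $e^{\eta L}$ rather than with $S$. As written, your stability lemma for the continuous semigroup is unproven and strictly stronger than what the theorem's assumptions permit.
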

\begin{proof}
First of all, we have
\begin{equation}\label{con}
	u(x, (n+1)\eta)=e^{\eta L}u(x,n\eta),~\forall n\ge 0.
\end{equation}

Since $\sup_{\xi}\|f(\cdot,\xi)\|_{C^5}<\infty$, for any $\varphi\in C_b^4(\mathbb{R}^d)$, there exists $\eta_0>0$ such that we have the semi-group expansion by Theorem \ref{sequenceproperty},
\[
|e^{\eta L}u^n(x)-u^n(x)-\eta Lu^n(x)|\le C(\|u^n\|_{C^4})\eta^2\le C(T,\|\varphi\|_{C^4}, \eta_0)\eta^2,~\forall \eta\le \eta_0.
\]
We therefore have
\begin{gather}\label{semiexpan1}
\Big|e^{\eta L}u^n(x)-u^n(x)-\eta b\cdot \nabla u^n(x)-\frac{1}{2}\eta^2\Sigma:\nabla^2u^n(x) \Big|\le C(T,\|\varphi\|_{C^4},\eta_0)\eta^2.
\end{gather}

If $\sup_{\xi}\|f(\cdot,\xi)\|_{C^7}<\infty$ and we take $\varphi\in C_b^6(\mathbb{R}^d)$, then there exists $\eta_0>0$ and we have for $\eta\le \eta_0$ by Theorem \ref{sequenceproperty}:
\[
|e^{\eta L}u^n(x)-u^n(x)-\eta Lu^n(x)-\frac{1}{2}\eta^2L^2u^n(x)|\le C(\|u^n\|_{C^6},\eta_0)\eta^3\le C(T,  \|\varphi\|_{C^6}, \eta_0)\eta^3.
\]
Therefore, we have
\begin{multline}\label{semiexpan}
\Big|e^{\eta L}u^n(x)-u^n(x)-\eta b\cdot \nabla u^n(x)-\frac{1}{2}\eta^2(\Sigma+bb^T):\nabla^2u^n(x) \\
-\frac{1}{4}\eta^2\nabla |b|^2\cdot\nabla u^n(x)\Big|\le C(T,\|\varphi\|_{C^6},\eta_0)\eta^3.
\end{multline}

On the other hand, we use Taylor expansion to \eqref{eq:expectation},
	\begin{equation}\label{taylor}
	|u^{n+1}(x)-u^n(x)+\eta\nabla f(x)\cdot\nabla u^n(x)-\frac{1}{2}\eta^2\bbE(\nabla f(x;\xi)\nabla f(x;\xi)^T):\nabla^2u^n(x)|\le C\eta^3.
	\end{equation}
If we choose $b(x)=-\nabla f(x)$, \eqref{semiexpan1} and \eqref{taylor} imply that
\begin{gather}\label{dis}
R^n:=\|u^{n+1}-e^{\eta L}u^n(x)\|_{\infty}\le 
\frac{1}{4}\eta^2\|\nabla |b|^2 \nabla u^n\|_{\infty}+C(T,\|\varphi\|_{C^4},\eta_0)\eta^2
\le C \eta^2.
\end{gather}
If instead we choose $b(x)=-\nabla f(x)-\frac{1}{4}\eta\nabla|\nabla f(x)|^2$ and $\Sigma=\var(\nabla f(x;\xi))$,  then \eqref{semiexpan} and \eqref{taylor} imply that
\begin{gather}
R^n:=\|u^{n+1}-e^{\eta L}u^n(x)\|_{\infty}\le C(T,\|\varphi\|_{C^6},\eta_0)\eta^3.
\end{gather}
We define $E^n:=\|u^n-u(\cdot,n\eta)\|_{L^\infty}$, \eqref{con} and the definition of $R^n$ yield that
\[
E^{n+1}\le  \left\|e^{\eta L}\Big(u(\cdot, n\eta)-u^n\Big)\right\|_{\infty}+R^n\le \|u(\cdot, n\eta)-u^n\|_{L^\infty}+R^n=E^n+R^n.
\]
The second inequallity holds because $e^{tL}$ is $L^\infty$ contraction. 
The result then follows.
\end{proof}

\begin{remark}
To get the $O(\eta)$ weak approximation, we can even take $\Sigma=0$. Indeed, with $\Sigma=0$, $Z=X(n\eta)-x_n$ is a noise with magnitude $O(\sqrt{\eta})$. The weak order is $O(\eta)$ because $\mathbb{E}Z=O(\eta)$. Choosing $\Sigma=\var(\nabla f(x;\xi))$ does not improve the weak order from $O(\eta)$ to $O(\eta^2)$ but we believe it characterizes the leading order fluctuation which is left for future investigation.
\end{remark}

\subsection{A specific example}

In learning a deep neural network with $N\gg 1$ training samples, the loss function is often given by
\[
f(x)=\frac{1}{N}\sum_{k=1}^N f_k(x).
\] 
To train a neural network, the back propogation algorithm is often applied to compute $\nabla f_k(x)$, which is usually not trivial, making computing $\nabla f(x)$ expensive. One strategy is to pick $\zeta\in \{1, \ldots, N\}$ uniformly and the following SGD is applied
\[
x_{n+1}=x_n-\eta \nabla f_{\zeta}(x_n).
\]
Such  SGD algorithm and its SDE approximation were studied in \cite{litaie2017}. The results in Theorem \ref{thm:sdeforsgd} can be viewed as a slight generalization of the results in \cite{litaie2017}, but our proof is performed in a clear way based on the semi-groups.  The operators $S$ and $\Sigma$ for this specific example are given respectively by
\begin{gather*}
Su(x)=\frac{1}{N}\sum_{k=1}^N u(x-\eta \nabla f_k(x)), ~~ \Sigma=\frac{1}{N}\sum_{k=1}^N (\nabla f(x)-\nabla f_k(x))\otimes (\nabla f(x)-\nabla f_k(x)).
\end{gather*}

Now that we have the connection between the SGD and SDEs, the well-known results in SDEs can be borrowed to understand the behaviors of SGD. For example, this gives us the intuition how the batch size in a general SGD can help to escape from sharp minimizers and saddle points of the loss function by affecting the diffusion (see \cite{llql17}).

\section{The semigroups from online PCA}\label{sec:pca}

Assume some data have $d$ coordinates and they can be represented by points in $\bbR^d$. Let  $\xi \in \bbR^d$ be a random vector sampled from the distribution of the data with mean centered to zero (if not, we use $\xi-\bbE \xi$) and the covariance matrix to be
\[
\Sigma=\bbE(\xi\xi^T).
\] 
Principal component analysis (PCA) is a procedure to find $k$ ($k< d$) linear combinations of these coordinates: $w_1^T\xi, \ldots, w_k^T\xi$ such that for all $i=1, 2,\ldots, k$, the optimization problem is solved
\begin{gather}
\max\bbE (w_i^T\xi)^2,~\text{subject to } w_i^T w_j=\delta_{ij},~j\le i.
\end{gather} 
It is well-known that $w_1, \ldots, w_k$ are the eigenvectors of $\Sigma$ corresponding to the first $k$ eigenvalues.

In the online PCA procedure, an adaptive system receives a stream of data $\xi(n)\in \bbR^d$ and tries to compute the estimates of $w_1, \ldots, w_k$ \cite{oja82,oja92}. 
The algorithm in \cite{oja82} in the case $k=1$ can be summarized as 
\begin{gather}
w^{n}=\bbQ\left(w^{n-1}+\eta \nabla f(w^{n-1}; \xi(n))\right)=\frac{w^{n-1}+\eta(n)\xi(n)\xi(n)^{T}w^{n-1}}{|w^{n-1}+\eta(n)\xi(n)\xi(n)^{T}w^{n-1}|},
\end{gather}
where $f(w; \xi(n))=(w^T\xi(n))^2$ and 
\[
\mathbb{Q}v:=v/|v|.
\]
This algorithm is also called the stochastic gradient ascent (SGA) algorithm.

Suppose we choose $\eta(n)=\eta$ to be a constant and  take
\begin{gather*}
\xi(n)\sim  \nu,~i.i.d.,
\end{gather*}
where $\nu$ is some probability distribution in $\bbR^d$. Then, $\{w^n\}$ forms a time-homogeneous Markov chain on $\bbS^{d-1}$. Our goal in this section is to study this Markov chain and its semi-groups. 

For the convenience of following discussion,  we assume:
\begin{assumption}\label{ass:boundednessofxi}
There exists $C>0$ such that $\forall \xi \sim \nu$,
\[
|\xi |\le C.
\]
\end{assumption}

\subsection{The semi-groups and properties}

Similarly as in Section \ref{sec:sgd}, fix a test function $\varphi\in L^{\infty}(\bbS^{d-1})$, and define
\begin{gather}\label{eq:sphereun}
u^n(w^0)=\bbE_{w^0}\varphi(w^n)=\int_{\bbS^{d-1}}\varphi(y)\mu^n(dS; w^0) .
\end{gather}
Again by the Markov property, for the SGA algorithm, we have
\begin{gather}\label{pcaiter}
u^{n+1}(w)=\bbE u^n(\bbQ(w+\eta \xi\xi^Tw))=:S u^n (w),
\end{gather}
with $u^0(w)=\varphi(w)$. Clearly, $\{S^n\}_{n\ge 0}$ form a semi-group.

For discussing the dynamics on sphere, we find it is convenient to extend $w$ into a neighborhood of the sphere as 
\[
w=\frac{x}{|x|},~x\in \bbR^d,
\]
and introduce the projection:  
\begin{gather}
 \bbP :=I-w\otimes w.
\end{gather}
This extension allows us to perform the computation on sphere by performing computation in $\bbR^d$. For example, if $\psi\in C^1(\bbS^{d-1})$, then we extend $\psi$ into a neighborhood as well and the gradient operator on the sphere can be written in terms of this extension as:
\begin{gather}
\nabla_{S} \psi(w)=\bbP\nabla \psi(x)|_{x=w}=(I-w\otimes w)\cdot\nabla \psi(w),~w\in \bbS^{d-1}.
\end{gather}
Clearly, $\nabla_S\psi$ only depends on the values of $\psi$ on $\bbS^{d-1}$, not on the extension. The extension is introduced for the convenience of computation. We use $C^k(\bbS^{d-1})$ to denote the space of functions that are $k$-th order continuously differentiable on the sphere with respect to $\nabla_S$, with the norm:
\begin{gather}
\|\psi\|_{C^k(\bbS^{d-1})}:= \sum_{|\alpha|\le k} \sup_{w\in \bbS^{d-1}}|\nabla_S^{\alpha} \psi(w)|.
\end{gather}
Here $\alpha=(\alpha_1, \ldots, \alpha_m)$ , $m\le k$ is a multi-index so that for a vector $v=(v^1, v^2, \ldots, v^d)$, $v^{\alpha}=\prod_{j=1}^mv^{\alpha_j}$.

Now we study the semi-groups for the online PCA:
\begin{proposition}
\begin{enumerate}[(i)]
\item $S: L^{\infty}(\bbS^{d-1})\to L^{\infty}(\bbS^{d-1})$ is a contraction. 
\item For any $\varphi\in C^k(\bbS^{d-1})$, there exists $\eta_0>0$ and $C(k, \eta_0, T)$ such that
\[
\|u^n\|_{C^k(\bbS^{d-1})}\le C(k,T)\|\varphi\|_{C^k(\bbS^{d-1})},~\forall \eta\le \eta_0.
\]

\item There exists $S^*: L^1(\bbS^{d-1})\to L^1(\bbS^{d-1})$ such that $S$ is the dual of $S^*$. $S^*$ is a contraction on $L^1$. Further, for any $\rho\in L^1$ and $\rho\ge 0$, we have $S^*\rho\ge 0$ and $\|S^*\rho\|_1=\|\rho\|_1$.
\end{enumerate}
\end{proposition}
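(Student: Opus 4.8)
These three statements are the spherical counterparts of Theorem \ref{sequenceproperty}(i)--(ii) and Proposition \ref{dualsgd}, and my plan is to transplant those arguments to $\bbS^{d-1}$. Throughout I write $g(w;\xi):=\bbQ((I+\eta\xi\xi^T)w)$ for the random self-map of the sphere appearing in \eqref{pcaiter}, so that $Su(w)=\bbE\,u(g(w;\xi))$. Part (i) is then immediate, just as in Theorem \ref{sequenceproperty}(ii): since $g(w;\xi)\in\bbS^{d-1}$ for all $w$ and $\xi$, we have $|Su(w)|\le\bbE|u(g(w;\xi))|\le\|u\|_{L^\infty}$, and taking the supremum over $w\in\bbS^{d-1}$ gives the contraction.

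For part (ii) I would reproduce the inductive estimate $\|u^{n+1}\|_{C^k}\le(1+C\eta)\|u^n\|_{C^k}$ from Theorem \ref{sequenceproperty}(i). The crucial point is that $g(\cdot;\xi)$ is a smooth self-map of $\bbS^{d-1}$ that reduces to the identity at $\eta=0$ (since $\bbQ w=w$ on the sphere); under Assumption \ref{ass:boundednessofxi} the matrix $I+\eta\xi\xi^T$ is uniformly close to $I$, and a direct computation using the ambient extension $w=x/|x|$ and the projection $\bbP=I-w\otimes w$ gives $\|g(\cdot;\xi)-\mathrm{id}\|_{C^k(\bbS^{d-1})}=O(\eta)$ uniformly in $\xi$. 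Differentiating $Su^n=\bbE(u^n\circ g)$ by the chain rule on the sphere (the expectation commutes with $\nabla_S$ because $\xi$ is independent of $w$), the top-order term in $\nabla_S^\alpha(u^n\circ g)$ pairs $(\nabla_S^{|\alpha|}u^n)(g(w;\xi))$ with $(D_S g)^{\otimes|\alpha|}$, where $D_S g=\mathrm{id}+O(\eta)$ contributes a factor $1+O(\eta)$; every remaining term carries at least one intrinsic derivative of $g$ of order $\ge2$, each of size $O(\eta)$, and is therefore bounded by $C\eta\|u^n\|_{C^k}$. This yields $\|u^{n+1}\|_{C^k(\bbS^{d-1})}\le(1+C(k,\eta_0)\eta)\|u^n\|_{C^k(\bbS^{d-1})}$ for $\eta\le\eta_0$, and iterating under $n\eta\le T$ produces the factor $e^{C(k,\eta_0)T}$.

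For part (iii) I follow the structure of Proposition \ref{dualsgd}. First I note that $g(\cdot;\xi)$ is in fact a diffeomorphism of $\bbS^{d-1}$ for \emph{every} $\eta\ge0$, with an explicit inverse: setting $A=I+\eta\xi\xi^T$ (symmetric positive definite) and $h(y;\xi):=\bbQ(A^{-1}y)$, one checks directly that $g(h(y;\xi);\xi)=y$. Assumption \ref{ass:boundednessofxi} makes $A$ and $A^{-1}$ uniformly bounded, so $g$ and $h$ have uniformly bounded derivatives and the surface Jacobian $J_g$ of $g$ with respect to $dS$ is bounded away from $0$ and $\infty$. The change of variables $y=g(w;\xi)$ then gives, for $\rho\in L^1(\bbS^{d-1})$,
\[
\langle Su,\rho\rangle=\bbE\int_{\bbS^{d-1}}u(g(w;\xi))\,\rho(w)\,dS(w)=\bbE\int_{\bbS^{d-1}}u(y)\,\frac{\rho}{|J_g|}\circ h(y;\xi)\,dS(y),
\]
and since $(L^1)'=L^\infty$ this identifies $S$ as the dual of
\[
S^*\rho=\bbE\left(\frac{\rho}{|J_g|}\circ h(\cdot;\xi)\right).
\]
Nonnegativity of $S^*\rho$ for $\rho\ge0$ is read off from this formula; taking $u\equiv1$ (so $Su\equiv1$) gives $\int_{\bbS^{d-1}}S^*\rho\,dS=\int_{\bbS^{d-1}}\rho\,dS$, whence $\|S^*\rho\|_1=\|\rho\|_1$ on nonnegative data, and the $L^1$ contraction for general $\rho$ follows from the Crandall--Tartar lemma \cite[Proposition 1]{crandalltarar80}, exactly as in Proposition \ref{dualsgd}(ii).

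The main obstacle I anticipate is purely computational and lies in the spherical bookkeeping of (ii) and (iii): expressing $D_S g$ and the higher intrinsic derivatives of $g$ through $\nabla_S$ and extracting the $O(\eta)$ smallness of all but the top-order contribution, and likewise evaluating the surface Jacobian $J_g$. As the paper already notes, the ambient extension $w=x/|x|$ with the projection $\bbP$ converts these into Euclidean computations, which is precisely what renders the $O(\eta)$ estimates and the change of variables tractable.
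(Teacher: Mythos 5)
Your proposal is correct and follows essentially the same route as the paper: (i) directly from the averaging formula, (ii) via the $C^k$ bound $\|g(\cdot;\xi)\|_{C^k}\le 1+C\eta$ plus the chain rule on the sphere and iteration, and (iii) via the explicit inverse $h(y;\xi)=\bbQ((I+\eta\xi\xi^T)^{-1}y)$, the spherical change of variables, and the Crandall--Tartar lemma, exactly as in Proposition \ref{dualsgd}. In fact your write-up supplies more detail than the paper's own terse proof (e.g.\ the verification that $g(h(y;\xi);\xi)=y$ for all $\eta\ge 0$ and the bookkeeping of lower-order chain-rule terms), and your formula $S^*\rho=\bbE\bigl(\tfrac{\rho}{|J_g|}\circ h\bigr)$ agrees with the paper's $\bbE\bigl(\rho(h)\,|J_h|\bigr)$ since $J_h=1/(J_g\circ h)$.
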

\begin{proof}
(i). That $S$ is an $L^{\infty}$ contraction is clear from \eqref{pcaiter}.

(ii). For each $k$, there exists $\eta_0>0$ and $C(k, T, \eta_0)>0$ such that
\[
\|\bbQ(w+\eta \xi \xi^T w)\|_{C^k(\bbS^{d-1}; \bbS^{d-1})}
\le 1+C\eta,~\eta\le \eta_0.
\]
By \eqref{pcaiter} and chain rule for derivatives on sphere, we have 
\begin{gather}
\|u^{n+1}\|_{C^k(\bbS^{d-1})}\le  (1+C_1\eta)\| u^n\|_{C^k(\bbS^{d-1})},~\eta\le\eta_0,
\end{gather}
and the second claim follows.

(iii). Similar to Proposition \ref{dualsgd}, we find that $S^*$ is given by 
\[
S^*\rho(v)=\bbE(\rho\big(h(v; \xi)\big)|J_h(v; \xi))|),
\]
 where $h(v;\xi)=\frac{(I+\eta\xi\xi^T)^{-1}v}{|(I+\eta\xi\xi^T)^{-1}v|}$ is the inverse mapping of $\bbQ(w+\eta \xi\xi^Tw)$, and $|J_h|$ accounts for the volume change on $\bbS^{d-1}$ under $h$. 
 The properties are then similarly proved as in Proposition  \ref{dualsgd}.
\end{proof}

\subsection{The diffusion approximation}
Now, we move onto the SDE approximation to the online PCA, i.e. seeking a semi-group generated by diffusion processes on sphere to approximate the discrete semi-groups. 

Before the discussion, let's consider the second order tensor $\nabla_{S}^2u$
\begin{gather}
\nabla_{S}^2u=\nabla_S(\nabla_S u).
\end{gather}
 Recall that for a vector field $\phi\in C^2(\bbS^{d-1}; \bbR^d)$, the divergence of $\phi$ is defined as
\[
\int_{\bbS^{d-1}} \mathrm{div}(\phi)\,\varphi\, dS=-\int_{\bbS^{d-1}} \phi\cdot \nabla_{S}\varphi\,dS.
\]
Again we extend the vector field into a neighborhood of the sphere so that we can use formulas in $\bbR^d$ to compute. Using the formula $\int_{\bbS^{d-1}} \nabla_S\cdot \phi dS=\int_{\bbS^{d-1}}(\nabla\cdot w) w\cdot \phi dS=(d-1)\int_{\bbS^{d-1}} w\cdot \phi dS$, one can derive that 
\begin{gather}
\mathrm{div}(\phi)=\nabla_S\cdot (\bbP \phi).
\end{gather} 
It follows that
\begin{gather}
\tr(\nabla_S^2u)=\nabla_S\cdot(\nabla_Su)=\mathrm{div}(\nabla_S u)=:\Delta_S u,
\end{gather}
because $\bbP\nabla_Su=\nabla_Su$.  $\Delta_Su$ is called the Laplace-Beltrami operator on $\bbS^{d-1}$.

Next we give the Taylor expansion and the proof can be found in Appendix \ref{app:proofspheretaylor}.
\begin{lemma}\label{lmm:pcaspheretaylor}
Let $w\in \bbS^{d-1}$, and $v\in \bbR^d$. Suppose $u\in C^3(\bbS^{d-1})$, then we have
\[
u\left(\frac{w+\eta v}{|w+\eta v|}\right)
=u(w)+\eta v\cdot\nabla_{S}u+\frac{1}{2}\eta^2 (vv:\nabla_S^2u-w\cdot v\otimes v\cdot\nabla_S u)+R(\eta)\eta^3,
\]
where $R(\eta)$ is a bounded function.
\end{lemma}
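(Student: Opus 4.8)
The plan is to reduce the claim to a one-dimensional Taylor expansion along the curve $\gamma(\eta):=(w+\eta v)/|w+\eta v|$, which for $|w|=1$ is smooth in $\eta$ on a fixed neighborhood of $0$ and stays on $\bbS^{d-1}$. Since $u\in C^3(\bbS^{d-1})$ and $\gamma$ is smooth, the scalar map $\eta\mapsto u(\gamma(\eta))$ is $C^3$, and Taylor's theorem with Lagrange remainder gives $u(\gamma(\eta))=u(w)+\eta(u\circ\gamma)'(0)+\tfrac12\eta^2(u\circ\gamma)''(0)+\tfrac16\eta^3(u\circ\gamma)'''(\theta_\eta)$. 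Hence it suffices to compute $(u\circ\gamma)'(0)$ and $(u\circ\gamma)''(0)$ and to match them with the two displayed coefficients, while $R(\eta)=\tfrac16(u\circ\gamma)'''(\theta_\eta)$ is bounded because $\|u\|_{C^3}<\infty$ and the derivatives of $\gamma$ stay bounded near $\eta=0$.

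First I would differentiate the curve. Writing $r(\eta)=|w+\eta v|=(1+2\eta\,w\cdot v+\eta^2|v|^2)^{1/2}$, one finds $\gamma'(0)=v-(w\cdot v)w=\bbP v$, which is tangent to the sphere, and a second differentiation yields $\gamma''(0)=-2(w\cdot v)\bbP v-|\bbP v|^2w$. The geometry here is transparent: the tangential (covariant) part is $-2(w\cdot v)\bbP v$, while the normal part $-|\bbP v|^2w=-|\gamma'(0)|^2w$ is the centripetal acceleration of a curve constrained to the unit sphere.

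Next come the chain-rule identifications, performed through an arbitrary $C^3$ extension of $u$ to a neighborhood so that the ambient gradient $\nabla u$ and Hessian $\nabla^2u$ are available. For the first derivative, $(u\circ\gamma)'(0)=\nabla u(w)\cdot\gamma'(0)=\nabla_S u\cdot\bbP v=\nabla_S u\cdot v$, using $\bbP\nabla u=\nabla_S u$ and $\bbP v\perp w$. For the second derivative the chain rule gives $(u\circ\gamma)''(0)=(\bbP v)^{T}\nabla^2u\,(\bbP v)+\nabla u\cdot\gamma''(0)$, and substituting $\gamma''(0)$ turns the last term into $-2(w\cdot v)(\nabla_S u\cdot v)-|\bbP v|^2(w\cdot\nabla u)$. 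The essential step is to rewrite the extension-dependent quantity $(\bbP v)^{T}\nabla^2u\,(\bbP v)-|\bbP v|^2(w\cdot\nabla u)$ intrinsically. For this I would compute $\nabla_S^2u=\nabla_S(\nabla_S u)$ directly from the definition, using that on $\bbS^{d-1}$ one has $\partial_k w_j=\bbP_{jk}$ for the normal extension $w=x/|x|$; this yields the identity
\[
vv:\nabla_S^2u=(\bbP v)^{T}\nabla^2u\,(\bbP v)-|\bbP v|^2(w\cdot\nabla u)-(w\cdot v)(v\cdot\nabla_S u).
\]
Plugging this in, the terms carrying the extension-dependent normal derivative $w\cdot\nabla u$ cancel and one is left with $(u\circ\gamma)''(0)=vv:\nabla_S^2u-(w\cdot v)(v\cdot\nabla_S u)$, which is precisely the bracketed coefficient in the stated second-order term.

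The main obstacle is this last conversion: verifying that the ambient second derivatives assemble into the intrinsic surface Hessian and, in particular, that the normal-derivative contributions arising from the Hessian term and from the centripetal part of $\gamma''(0)$ cancel, so that the final expression is independent of the chosen extension. Everything else is a routine differentiation of $\gamma$ together with the elementary remainder bound supplied by $u\in C^3(\bbS^{d-1})$.
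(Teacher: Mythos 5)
Your proposal is correct and takes essentially the same route as the paper's own proof: both reduce the claim to the one-dimensional Taylor expansion of $g(\eta)=u\bigl((w+\eta v)/|w+\eta v|\bigr)$ at $\eta=0$, compute $g'(0)$ and $g''(0)$ by the chain rule through an ambient extension, and then invoke the identity expressing $\nabla_S^2u$ in terms of $\bbP\nabla^2u\,\bbP$ plus normal-derivative corrections (obtained from $\nabla w=I-w\otimes w$ for the normal extension) to recast $g''(0)$ intrinsically as $vv:\nabla_S^2u-(w\cdot v)(v\cdot\nabla_S u)$. Your tangential/normal splitting of $\gamma''(0)$ and the explicit remark that the extension-dependent terms cancel are just a cleaner bookkeeping of exactly the computation the paper performs.
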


Now, we introduce $f(w; \xi)=\frac{1}{2}w^T\xi\xi^Tw$ and thus
\begin{gather}
f(w)=\bbE f(w; \xi)=\frac{1}{2}w^T\Sigma w.
\end{gather}
Define the following second moment:
\begin{gather}
M(w):=\bbE \nabla f(w; \xi) \nabla f(w, \xi)^T=w\otimes w : \bbE\, (\xi\otimes\xi\otimes \xi\otimes\xi ).
\end{gather}
By Lemma \ref{lmm:pcaspheretaylor}, we then have:
\begin{multline}\label{eq:spheretay}
u^{n+1}(w)
=u^n(w)+\eta \nabla_S f(w)\cdot \nabla_S u^n
+\frac{1}{2}\eta^2(M(w):\nabla_S^2 u^n-w\cdot M(w)\cdot \nabla_S u^n)+R(\eta)\eta^3.
\end{multline}

We now construct SDEs on $\bbS^{d-1}$ whose generator approximates the right hand side of \eqref{eq:spheretay}. Consider the following general SDE in Stratonovich sense in $\mathbb{R}^d$:
\begin{gather}\label{eq:sde2}
dX=\bbP b(X)\, dt+\bbP\sigma(X) \circ dW,
\end{gather}
where $W$ is the standard Wiener process (Brownian motion) in $\mathbb{R}^d$ while $'\circ'$ here represents Stratonovich stochastic integrals, convenient for SDEs on manifold \cite{hsu02}.

\begin{lemma}
$X$ stays on $\bbS^{d-1}$ if $X(0)\in \bbS^{d-1}$. In other words, $|X(t)|=1$.
\end{lemma}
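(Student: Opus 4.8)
The plan is to exploit the defining feature of Stratonovich calculus, namely that it obeys the ordinary chain rule with no Itô-type second-order correction, and to apply it to the scalar observable $\phi(X)=|X|^2=X\cdot X$. Concretely, using the equation $dX=\bbP b(X)\,dt+\bbP\sigma(X)\circ dW$ I would write
\[
d|X|^2 = 2X\cdot dX = 2\,X^T\bbP\, b(X)\,dt + 2\,X^T\bbP\,\sigma(X)\circ dW,
\]
so that the entire evolution of $|X|^2$ is captured in this single line precisely because the Stratonovich differential respects the usual product rule.

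The key algebraic step is that $X$ is annihilated by the projection from the left. Since $\bbP=I-w\otimes w$ with $w=X/|X|$, the projection is the orthogonal projection onto the tangent space $w^{\perp}$, and a one-line computation gives
\[
X^T\bbP = X^T - \frac{X^T X}{|X|^2}\,X^T = X^T - X^T = 0,
\]
valid for every $X\neq 0$, not merely on the sphere. Substituting this into the previous display makes both the drift and the diffusion contributions vanish, so $d|X|^2=0$ and $|X(t)|^2$ is constant in $t$. With $|X(0)|=1$ this yields $|X(t)|=1$ for all $t$. This is exactly the manifestation of the general principle that a Stratonovich SDE whose coefficients are everywhere tangent to the sphere preserves the sphere, which is why the Stratonovich (rather than It\^o) interpretation is adopted here.

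The only technical point to watch is the well-definedness of $\bbP$ along the trajectory, since $\bbP$ involves $w=X/|X|$ and is singular at $X=0$. I would handle this by a standard localization: granting local existence of the solution for the coefficients $\bbP b$ and $\bbP\sigma$, which are smooth away from the origin, let $\tau=\inf\{t:\ |X(t)|\notin(1/2,2)\}$, run the computation above on $[0,\tau)$ to conclude $|X(t)|=1$ there, and observe that this conservation law forces $X$ to stay on $\bbS^{d-1}$ and in particular bounded away from the origin, so that in fact $\tau=\infty$. The main (and only mild) obstacle is therefore not the computation but this bookkeeping of existence and non-explosion; once local existence is in hand, conservation of $|X|^2$ closes the argument and promotes the solution to a global process on $\bbS^{d-1}$.
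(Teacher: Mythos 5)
Your proof is correct and follows essentially the same route as the paper: apply the Stratonovich chain rule to $|X|^2$ and observe that the projection $\bbP$ annihilates the radial direction ($\bbP X=0$, equivalently $X^T\bbP=0$ by symmetry), so both drift and noise terms vanish. The additional localization argument you include to handle the singularity of $\bbP$ at the origin is careful bookkeeping the paper omits, but it does not change the substance of the argument.
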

To see this, we only need to apply It\^o's formula (for Stratonovich integrals) with the test function $f(x)=|x|^2$ and noting $\bbP \nabla f=\nabla_S f=0$.

Let $\varphi\in C^3(\bbS^{d-1})$ and we extend it to the ambient space of $\bbS^{d-1}$. Consider
\begin{gather}\label{eq:usphere}
u(x, t)=\bbE_x \varphi(X).
\end{gather}
We find using It\^o's formula (for Stratonovich integrals) that (the explicit expression of $\nabla_S^2$ in Appendix \ref{app:proofspheretaylor} is needed to derive this)
\begin{gather}\label{eq:spherebackkolmogrov}
u_t =(\bbP b+\bbP b_1(\sigma))\cdot\nabla_Su+\frac{1}{2}\sigma\sigma^T:\nabla_S^2u =: L_Su,
\end{gather}
where
\begin{gather}\label{eq:pb1}
(\bbP b_1(\sigma))_i=\frac{1}{2}(\bbP\sigma)_{kj}(\bbP\partial_k \sigma)_{ij}.
\end{gather}
The operator $L_S$ is elliptic on sphere. 
\begin{remark}
Clearly, if we take $\sigma=I$, then we have
\[
u_t=\frac{1}{2}\Delta_Su.
\]
This means that $dX=P\circ dW$ gives the spherical Brownian motion, which is the Stroock's representation of spherical Brownian motion \cite[Section 3.3]{hsu02}.
\end{remark}

With \eqref{eq:spheretay} and \eqref{eq:spherebackkolmogrov}, we conclude that the discrete semi-groups for online PCA can be approximated in the weak sense by the stochastic processes on the sphere:
\begin{theorem}
Assume Assumption \ref{ass:boundednessofxi}. 
Consider the SDE \eqref{eq:sde2} with $\sigma=:\sqrt{\eta}  S$:
\begin{gather}\label{eq:SDE3}
dw=\bbP b(w)\, dt+\sqrt{\eta}\bbP S(w)\circ dW.
\end{gather}
 $u^n$ and $u(w, t)$ are given as in \eqref{eq:sphereun} and \eqref{eq:usphere} respectively.  If we take $\bbP b(w)=\nabla_S f(w)=(I-w\otimes w)\cdot \Sigma w$ and $S(\cdot)\in C_b^2(\mathbb{S}^{d-1})$ to be positive semi-definite, then  for all $\varphi\in C_b^4(\mathbb{S}^{d-1})$, there exist $\eta_0>0$ and $C(T, \|\varphi\|_{C^4},\eta_0)>0$ such that $\forall \eta\le \eta_0$,
\[
   \sup_{n: n\eta\le T}\|u^n-u(\cdot, n\eta)\|_{L^{\infty}(\bbS^{d-1})}\le C(T, \|\varphi\|_{C^4}, \eta_0)\eta.
\]
If instead we take
\begin{gather*}
\begin{split}
& \bbP b(w)=\nabla_Sf(w)-\frac{1}{2}\eta \bbP(S(w)^2\cdot w)
-\frac{1}{2}\eta \bbP b_1(S)(w)
-\frac{1}{2}\eta \nabla_Sf(w)\cdot \nabla_S^2 f(w),\\
& S(w)=\sqrt{\var(\nabla f(w; \xi))}=\sqrt{M(w)-\nabla_Sf(w) \nabla_Sf(w)^T }.
\end{split}
\end{gather*}
where $Pb_1(\cdot)$ is given in \eqref{eq:pb1}, then for all  $\varphi\in C_b^6(\mathbb{S}^{d-1})$, there exist $\eta_0>0$ and $C(T, \|\varphi\|_{C^6}, \eta_0)>0$ such that $\forall \eta\le \eta_0$,
\[
   \sup_{n: n\eta\le T}\|u^n-u(\cdot, n\eta)\|_{L^{\infty}(\bbS^{d-1})}\le  C(T,  \|\varphi\|_{C^6}, \eta_0) \eta^2.
\]
\end{theorem}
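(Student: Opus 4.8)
The plan is to follow the same three-step scheme as in the proof of Theorem \ref{thm:sdeforsgd}, now carried out intrinsically on $\bbS^{d-1}$: prove a one-step consistency estimate between the discrete map $S$ of \eqref{pcaiter} and the continuous generator $L_S$ of \eqref{eq:spherebackkolmogrov}, use the $L^\infty$-contraction of the continuous semigroup $e^{tL_S}$ for stability, and telescope over the $O(T/\eta)$ steps. The only structural novelty relative to the $\bbR^d$ case is that every expansion is taken with respect to $\nabla_S$ and the curved operator $L_S$, so the spherical Taylor formula of Lemma \ref{lmm:pcaspheretaylor} (in the form \eqref{eq:spheretay}) replaces the ordinary Taylor expansion.

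First I would use $u(\cdot,(n+1)\eta)=e^{\eta L_S}u(\cdot,n\eta)$ together with the uniform bound $\sup_{n\eta\le T}\|u^n\|_{C^k(\bbS^{d-1})}\le C\|\varphi\|_{C^k(\bbS^{d-1})}$ from the Proposition above (part (ii)) to expand the continuous semigroup to the needed order: for $\varphi\in C_b^4$ (so $k=4$, since the remainder carries $L_S^2u^n$),
\[
\bigl|e^{\eta L_S}u^n-u^n-\eta L_Su^n\bigr|\le C(\|u^n\|_{C^4})\eta^2,
\]
and for $\varphi\in C_b^6$ (so $k=6$, since the remainder carries $L_S^3u^n$),
\[
\bigl|e^{\eta L_S}u^n-u^n-\eta L_Su^n-\tfrac12\eta^2L_S^2u^n\bigr|\le C(\|u^n\|_{C^6})\eta^3.
\]
Inserting $\sigma=\sqrt{\eta}\,S$ into \eqref{eq:spherebackkolmogrov} and \eqref{eq:pb1}, and using that $\bbP b_1$ is quadratic in $\sigma$, gives $L_Su=\bbP b\cdot\nabla_Su+\eta\,\bbP b_1(S)\cdot\nabla_Su+\tfrac12\eta\,SS^T:\nabla_S^2u$, which I would then compare term by term with \eqref{eq:spheretay}.

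For the $O(\eta)$ statement the choice $\bbP b=\nabla_Sf$ already makes the $O(\eta)$ terms of $\eta L_Su^n$ and of $u^{n+1}-u^n$ coincide, so the one-step error $R^n:=\|u^{n+1}-e^{\eta L_S}u^n\|_\infty$ is $O(\eta^2)$ with no further work. The $O(\eta^2)$ statement is where the work lies, and I expect this matching to be the main obstacle. The diffusion contributes $\tfrac12\eta^2SS^T:\nabla_S^2u^n=\tfrac12\eta^2(M-\nabla_Sf\,\nabla_Sf^T):\nabla_S^2u^n$, while the second-derivative part of $\tfrac12\eta^2L_S^2u^n$ supplies the missing $\tfrac12\eta^2\nabla_Sf\,\nabla_Sf^T:\nabla_S^2u^n$, so together they reproduce the $\tfrac12\eta^2M:\nabla_S^2u^n$ demanded by \eqref{eq:spheretay}. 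The first-order-in-$\nabla_S$ contributions at $O(\eta^2)$ must then cancel exactly: the correction $-\tfrac12\eta\,\bbP(S^2\cdot w)$ reproduces the geometric term $-\tfrac12\eta^2\,w\cdot M\cdot\nabla_Su^n$ of \eqref{eq:spheretay} (using $\bbP(S^2w)\cdot\nabla_Su=w\cdot M\cdot\nabla_Su$, since $\nabla_Sf\,\nabla_Sf^Tw=0$), whereas the corrections $-\tfrac12\eta\,\nabla_Sf\cdot\nabla_S^2f$ and $-\tfrac12\eta\,\bbP b_1(S)$ are tuned against the first-derivative part of $\tfrac12\eta^2L_S^2u^n$ and against the Stratonovich drift $\eta\,\bbP b_1(S)$ carried by the generator. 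Executing this bookkeeping requires the explicit form of $\nabla_S^2$ and of $L_S^2$ on the sphere (the computations in Appendix \ref{app:proofspheretaylor}), with particular care for the curvature terms generated by applying $L_S$ twice, which have no analogue in $\bbR^d$; once it is done, $R^n=O(\eta^3)$.

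Finally I would close the estimate by stability. Since $u(\cdot,t)=\bbE_x\varphi(X)$ by \eqref{eq:usphere}, the semigroup $e^{tL_S}$ is an $L^\infty$-contraction, exactly as established for $e^{tL}$. Setting $E^n:=\|u^n-u(\cdot,n\eta)\|_{L^\infty(\bbS^{d-1})}$ and using $u(\cdot,(n+1)\eta)=e^{\eta L_S}u(\cdot,n\eta)$,
\[
E^{n+1}\le\bigl\|e^{\eta L_S}\bigl(u(\cdot,n\eta)-u^n\bigr)\bigr\|_\infty+R^n\le E^n+R^n.
\]
Summing over the at most $T/\eta$ steps gives $E^n\le(T/\eta)\max_m R^m$, which is $O(\eta)$ in the first case and $O(\eta^2)$ in the second, with constants depending on $T$, on $\eta_0$, and on $\|\varphi\|_{C^4}$ (resp. $\|\varphi\|_{C^6}$) through the uniform regularity bound. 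This yields both claimed rates.
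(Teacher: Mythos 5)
Your overall architecture is exactly what the paper intends: the paper omits this proof with the remark that it is ``similar as that for Theorem \ref{thm:sdeforsgd}'', and your three steps (one-step consistency from the spherical Taylor expansion \eqref{eq:spheretay} combined with the expansion of $e^{\eta L_S}$, stability from the $L^\infty$-contraction of the continuous semigroup, telescoping over the $O(T/\eta)$ steps) are the correct spherical transcription of that argument; your first-order case is complete as written.

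There is, however, a genuine gap at precisely the point you defer (``once it is done, $R^n=O(\eta^3)$''): the $O(\eta^2)$ first-derivative bookkeeping does not balance the way you describe it. Since $b_1$ is quadratic in $\sigma$, setting $\sigma=\sqrt{\eta}\,S$ in \eqref{eq:spherebackkolmogrov} makes the Stratonovich correction enter the generator as $+\eta\,\bbP b_1(S)\cdot\nabla_S u$, i.e.\ with coefficient one, whereas the prescribed drift subtracts only $\tfrac12\eta\,\bbP b_1(S)$. Your other pairings do close: $-\tfrac12\eta\,\nabla_Sf\cdot\nabla_S^2 f$ cancels the first-derivative part $+\tfrac12\eta^2(\nabla_Sf\cdot\nabla_S^2 f)\cdot\nabla_S u^n$ of $\tfrac12\eta^2 L_S^2u^n$ (whose $O(1)$ part is $(\nabla_Sf\cdot\nabla_S)^2$); the second-derivative pieces assemble to $\tfrac12\eta^2 M:\nabla_S^2u^n$; and $-\tfrac12\eta\,\bbP(S^2\cdot w)$ matches the geometric term because $S^2w=Mw$, as you note. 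But after these cancellations a net $+\tfrac12\eta^2\,\bbP b_1(S)\cdot\nabla_S u^n$ survives on the continuous side, and the discrete expansion \eqref{eq:spheretay} contains no $b_1$-type term at all --- it involves only $f$ and $M$, never $S$ or its derivatives --- so nothing on the discrete side can absorb it. Executing your own scheme therefore yields $R^n=O(\eta^2)$ and only an $O(\eta)$ global rate, not the claimed $O(\eta^2)$. To close the argument you must either exhibit an additional source of $-\tfrac12\eta^2\,\bbP b_1(S)\cdot\nabla_S u^n$ (none appears in $(\nabla_Sf\cdot\nabla_S)^2$ or in \eqref{eq:spheretay}), or conclude that the drift should carry $-\eta\,\bbP b_1(S)$ rather than $-\tfrac12\eta\,\bbP b_1(S)$, i.e.\ that the coefficient in the statement needs to be rechecked; either way, the cancellation you assert is exactly the step where the proof has to do real work, and as described it fails by a factor of $\tfrac12$.
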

The proof is similar as that for Theorem \ref{thm:sdeforsgd} and we omit here for brevity.

\section*{Acknowledgements}
The work of J.-G Liu is partially supported by KI-Net NSF RNMS11-07444 and NSF DMS-1514826. Y. Feng is supported by NSF DMS-1252912.

\appendix

\section{Proof of Lemma \ref{lmm:pcaspheretaylor}}\label{app:proofspheretaylor}

\begin{proof}[Proof of Lemma \ref{lmm:pcaspheretaylor}]
We set 
\[
g(\eta)=u\left(\frac{w+\eta v}{|w+\eta v|}\right) .
\]
By the fact that $|w|=1$ and direct computation, we have
\begin{gather*}
g'(0)=\nabla u \cdot (I-w\otimes w)\cdot v=v\cdot\nabla_S u,
\end{gather*}
and that
\begin{gather*}
g''(0)=(v\cdot(I-ww))\cdot\nabla^2u\cdot((I-ww)\cdot v)
+\nabla u\cdot(-2v (w\cdot v)-w(v\cdot v)+3w(w\cdot v)^2).
\end{gather*}
Since $w=x/|x|$, we have for $x=w\in \bbS^{d-1}$ that 
\[
\nabla w=I-w\otimes w.
\]
It follows that
\[
\nabla_S^2u(w)=(I-w\otimes w)\cdot\nabla^2u(w)\cdot(I-w\otimes w)
-[(w\cdot\nabla u)(I-w\otimes w)+((I-w\otimes w)\cdot\nabla u)\otimes w].
\]
Hence, we find that
\[
g''(0)=v\otimes v:\nabla_{S}^2u+\nabla u\cdot (w(w\cdot v)^2-v(w\cdot v))
=v\otimes v:\nabla_{S}^2u-w\cdot v\otimes v\cdot\nabla_S u,
\]
and $g'''(\eta)$ is bounded by the assumption. Hence, the claim follows from Taylor expansion.
\end{proof}

\bibliographystyle{plain}
\bibliography{sdealg}

\begin{thebibliography}{10}

\bibitem{allen2016first}
Z.~Allen-Zhu and Y.~Li.
\newblock First efficient convergence for streaming k-pca: a global, gap-free,
  and near-optimal rate.
\newblock {\em arXiv preprint arXiv:1607.07837}, 2016.

\bibitem{bottou2010large}
L.~Bottou.
\newblock Large-scale machine learning with stochastic gradient descent.
\newblock In {\em Proceedings of COMPSTAT'2010}, pages 177--186. Springer,
  2010.

\bibitem{bottou2016optimization}
L.~Bottou, F.~E. Curtis, and J.~Nocedal.
\newblock Optimization methods for large-scale machine learning.
\newblock {\em arXiv preprint arXiv:1606.04838}, 2016.

\bibitem{bubeck2015convex}
S.~Bubeck.
\newblock Convex optimization: Algorithms and complexity.
\newblock {\em Foundations and Trends{\textregistered} in Machine Learning},
  8(3-4):231--357, 2015.

\bibitem{crandalltarar80}
M.~G. Crandall and L.~Tartar.
\newblock Some relations between nonexpansive and order preserving mappings.
\newblock {\em Proceedings of the American Mathematical Society},
  78(3):385--390, 1980.

\bibitem{ghadimi2013stochastic}
S.~Ghadimi and G.~Lan.
\newblock Stochastic first-and zeroth-order methods for nonconvex stochastic
  programming.
\newblock {\em SIAM Journal on Optimization}, 23(4):2341--2368, 2013.

\bibitem{hsu02}
E.~P. Hsu.
\newblock {\em Stochastic analysis on manifolds}, volume~38.
\newblock American Mathematical Soc., 2002.

\bibitem{josse2011multiple}
J.~Josse, J.~Pag{\`e}s, and F.~Husson.
\newblock Multiple imputation in principal component analysis.
\newblock {\em Advances in data analysis and classification}, 5(3):231--246,
  2011.

\bibitem{llql17}
C.~J. Li, L.~Li, J.~Qian, and J.-G. Liu.
\newblock Batch size matters: A diffusion approximation framework on nonconvex
  stochastic gradient descent.
\newblock {\em arXiv preprint arXiv:1705.07562}, 2017.

\bibitem{li2016near}
C.~J. Li, M.~Wang, H.~Liu, and T.~Zhang.
\newblock Near-optimal stochastic approximation for online principal component
  estimation.
\newblock {\em arXiv preprint arXiv:1603.05305}, 2016.

\bibitem{litaie2017}
Q.~Li, C.~Tai, and W.~E.
\newblock Stochastic modified equations and adaptive stochastic gradient
  algorithms.
\newblock In {\em International Conference on Machine Learning}, pages
  2101--2110, 2017.

\bibitem{oja82}
E.~Oja.
\newblock Simplified neuron model as a principal component analyzer.
\newblock {\em Journal of mathematical biology}, 15(3):267--273, 1982.

\bibitem{oja92}
E.~Oja.
\newblock Principal components, minor components, and linear neural networks.
\newblock {\em Neural networks}, 5(6):927--935, 1992.

\bibitem{oksendal03}
B.~{\O}ksendal.
\newblock {\em Stochastic differential equations: an introduction with
  applications}.
\newblock Springer, Berlin, Heidelberg, sixth edition, 2003.

\bibitem{soize1994fokker}
C.~Soize.
\newblock {\em The Fokker-Planck equation for stochastic dynamical systems and
  its explicit steady state solutions}, volume~17.
\newblock World Scientific, 1994.

\end{thebibliography}

\end{document}